\setlist[enumerate,1]{label=(\roman*)}
\numberwithin{equation}{section}
\declaretheoremstyle[
  shaded={bgcolor=\thmcolor}
]{plain}
\declaretheoremstyle[
  headfont=\normalfont\bfseries,
  bodyfont=\normalfont,
  shaded={bgcolor=\defcolor}
]{noital}
\declaretheoremstyle[
  headfont=\normalfont\bfseries,
  bodyfont=\normalfont,
]{noital}
\declaretheorem[style=plain,numberwithin=section,name=Theorem]{theorem}
\declaretheorem[style=plain,sibling=theorem,name=Lemma]{lemma}
\declaretheorem[style=plain,sibling=theorem,name=Conjecture]{conjecture}
\declaretheorem[style=plain,sibling=theorem,name=Question]{question}
\declaretheorem[style=plain,numbered=no,name=Theorem]{theorem-n}
\declaretheorem[style=plain,numbered=no,name=Proposition]{proposition-n}
\declaretheorem[style=plain,numbered=no,name=Lemma]{lemma-n}
\declaretheorem[style=plain,numbered=no,name=Corollary]{corollary-n}
\declaretheorem[style=plain,numbered=no,name=Conjecture]{conjecture-n}
\declaretheorem[style=plain,numbered=no,name=Claim]{claim-n}
\declaretheorem[style=plain,numbered=no,name=Fact]{fact-n}
\declaretheorem[style=plain,numbered=no,name=Open Problem]{openproblem-n}
\declaretheorem[style=plain,numbered=no,name=Question]{question-n}
\declaretheorem[style=plain,numbered=no,name=Observation]{observation-n}
\declaretheorem[style=noital,numbered=no,name=Remark]{remark-n}
\declaretheorem[style=noital,numbered=no,name=Definition]{definition-n}
\declaretheorem[style=noital,numbered=no,name=Construction]{construction-n}
\declaretheorem[style=noital,numbered=no,name=Example]{example-n}
\newcommand{\defined}{\mathrel{\coloneqq}}
\newcommand{\st}{\mathbin{\colon}}
\DeclarePairedDelimiter{\set}{\lbrace}{\rbrace}
\newcommand{\emptyset}{\varnothing}
\DeclarePairedDelimiter{\card}{\lvert}{\rvert}
\newcommand{\union}{\mathbin{\cup}}
\newcommand{\bigunion}{\mathbin{\bigcup}}
\newcommand{\from}{\colon}
\newcommand{\setm}[1]{\setminus\set{#1}}
\newcommand{\mod}[1]{\ (\mathrm{mod}\ #1)}
\DeclarePairedDelimiterX{\abs}[1]
  {\lvert}{\rvert}{\ifblank{#1}{\,\cdot\,}{#1}}
\DeclarePairedDelimiterX{\norm}[1]
  {\lVert}{\rVert}{\ifblank{#1}{\,\cdot\,}{#1}}
\DeclarePairedDelimiterX{\inner}[2]
  {\langle}{\rangle}{\ifblank{#1}{\,\cdot\,}{#1},\ifblank{#2}{\,\cdot\,}{#2}}
\DeclarePairedDelimiterX{\absinner}[2]
  {|\langle}{\rangle|}{\ifblank{#1}{\,\cdot\,}{#1},\ifblank{#2}{\,\cdot\,}{#2}}
\DeclareMathDelimiter{\given}
  {\mathbin}{symbols}{"6A}{largesymbols}{"0C}
\DeclareMathOperator{\Prob}{\mathbb{P}}
\DeclarePairedDelimiterXPP{\prob}[1]
  {\Prob}{\lparen}{\rparen}{}
  {\renewcommand{\given}{\nonscript\;\delimsize\vert\nonscript\;\mathopen{}}#1}
\DeclareMathOperator{\Expec}{\mathbb{E}}
\DeclarePairedDelimiterXPP{\expec}[1]
  {\Expec}{\lparen}{\rparen}{}
  {\renewcommand{\given}{\nonscript\;\delimsize\vert\nonscript\;\mathopen{}}#1}
\DeclareMathOperator{\Var}{Var}
\DeclarePairedDelimiterXPP{\var}[1]
  {\Var}{\lparen}{\rparen}{}
  {\renewcommand{\given}{\nonscript\;\delimsize\vert\nonscript\;\mathopen{}}#1}
\DeclareMathOperator{\Cov}{Cov}
\DeclarePairedDelimiterXPP{\cov}[2]
  {\Cov}{\lparen}{\rparen}{}{#1,#2}
\newcommand{\sseq}{\subseteq}
\newcommand{\NN}{\mathbb{N}}
\newcommand{\RR}{\mathbb{R}}
\titleformat{\section}{\centering\bfseries\scshape\Large}{\thesection}{1em}{}
\titleformat{\subsection}{\bfseries\scshape\large}{\thesubsection}{1em}{}
\begin{document}

\title{\textsc{\bfseries Counterexamples to conjectures on strong maximality and minimality}}

\renewcommand{\thefootnote}{\fnsymbol{footnote}}

\author{\textsc{Lawrence Hollom}\footnotemark[1] \and \textsc{Benedict Randall Shaw}\footnotemark[1]}

\footnotetext[1]{\href{mailto:lh569@cam.ac.uk}{lh569@cam.ac.uk} and \href{mailto:bwr26@cam.ac.uk}{bwr26@cam.ac.uk} respectively. Department of Pure Mathematics and Mathematical Statistics (DPMMS), University of Cambridge, Wilberforce Road, Cambridge, CB3 0WA, United Kingdom}

\renewcommand{\thefootnote}{\arabic{footnote}}

\maketitle

\date{}

\begin{abstract}
  We provide counterexamples to several conjectures concerning strongly maximal and strongly minimal structures in infinite graphs and hypergraphs.
  In particular, we construct 3-uniform hypergraphs without strongly maximal matchings and without strongly minimal covers, and from our construction for covers we build a graph with no strongly minimal colouring.
  We also consider several refinements of these problems.


  Our results resolve conjectures and questions of Aharoni; Aharoni and Berger; Aharoni, Berger, Georgakopoulos, and Sprüssel; Aharoni and Korman; and Tardos.
\end{abstract}


\section{Introduction}

Many problems in graph theory ask when one may find a certain object of the greatest possible size. 
For example, Hall's theorem gives a condition for the existence of a matching which covers every vertex, and K\"{o}nig's theorem tells us that the largest matching and the smallest vertex-cover (that is, a set of vertices of the graph which meet every edge) have the same size.

When moving to the infinite case, versions of these results concerning only cardinality tend to be weak, and easy to prove. 
For example, given Hall's condition in an infinite bipartite graph, one can simply apply a greedy algorithm to construct a matching with the same cardinality as the vertex set of the graph.
However, the fact that the object---a matching, in this case---attains the maximum possible size tells us very little about its structure: in contrast with the finite case, it may not even be maximal. 
However, although in fact we may show a maximal matching exists, even that may not capture adequately the notion of being a `largest' object.

One way around these issues is to strengthen our desired notion of `maximality' to `strong maximality', which informally means that we can make no local improvement to our object.
More precisely, a \emph{strongly maximal matching} of a graph \(G\) is a matching \(M\) such that, for every matching \(M'\) of \(G\), we have \(\card{M \setminus M'} \geq \card{M' \setminus M}\).
Similarly, a \emph{strongly minimal vertex-cover} of \(G\) is a vertex-cover \(C\) such that, for every vertex-cover \(C'\) of \(G\), we have \(\card{C \setminus C'} \leq \card{C' \setminus C}\).

The notion of strong maximality more accurately captures our intuition of a `largest' object.
Returning to K\"{o}nig's theorem, it was proved by Aharoni \cite{Aha84} in 1984, confirming a conjecture of Erd\H{o}s (see e.g.\ \cite{Nas67}) that in any bipartite graph \(G\) one may find a strongly maximal matching \(M\) and a strongly minimal vertex-cover \(C\), and furthermore \(M\) and \(C\) are orthogonal---that is, one may find a bijection between \(C\) and \(M\) such that the vertex of \(C\) is contained in the corresponding edge of \(M\).
In fact, strong maximality and minimality of these objects follows from orthogonality alone, much like in the finite case.

Following on from this result, it is natural to then ask whether hypergraphs must also have strongly maximal matchings and strongly minimal covers. Here a \emph{hypergraph} \(H\) comprises a vertex set \(V\) together with a set \(E\) of edges, each of which is a subset of \(V\). We say that \(H\) is \(k\)-\emph{uniform} if every edge has size \(k\). Then a \emph{matching} is a subset of \(E\) whose edges are pairwise disjoint; an \emph{edge-cover} is a subset of \(E\) whose union is \(V\); and a \emph{vertex-cover} is a set of vertices which has non-empty intersection with every edge of \(E\).

Indeed, Aharoni \cite[Problem 5.5]{Aha91} asked in 1991 whether every hypergraph with finite edges contains a strongly maximal matching, and in 1992, Aharoni and Korman \cite[Conjecture 5.4]{AK92} conjectured that, in the absence of isolated vertices, there should also be a strongly minimal edge-cover.
Erd\H{o}s \cite{Erd94} expected the conjecture to be false and, indeed, counterexamples were found for both matchings and edge-covers by Ahlswede and Khachatrian \cite{AK96}, and van der Zypen \cite{vdZ22} respectively, in results which we shall return to later.
Nevertheless, these conjectures were later refined by Aharoni, Berger, Georgakopoulos, and Spr\"{u}ssel \cite[Conjecture 1.4]{ABGS08} to the following.

\begin{conjecture}
\label{conj:matching-and-cover}
    In any hypergraph with finitely bounded size of edges and no isolated vertices there exist a strongly maximal matching and a strongly minimal edge-cover.
\end{conjecture}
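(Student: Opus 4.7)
The plan is to refute Conjecture \ref{conj:matching-and-cover} by constructing explicit 3-uniform counterexamples for both of its parts. Since the Ahlswede--Khachatrian and van der Zypen counterexamples already treat the non-uniform case, and both crucially use edges of unbounded size (trading one long edge for many short ones, producing an infinite descending chain), the main obstacle is reproducing that lopsided-trade mechanism using edges of size exactly three.

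The construction I would attempt is self-similar. Build the hypergraph as a countable union of finite gadgets $G_1, G_2, \dots$, each sharing a small boundary set of vertices with the next. Each gadget should be engineered so that any matching restricted to it can be locally swapped for a matching using strictly more internal edges, at the cost of freeing some boundary vertices which the neighbouring gadget can absorb without itself losing any edges. Iterating this along an infinite path (or perhaps an infinite tree) should guarantee that every matching $M$ of the full hypergraph admits some $M'$ with $|M' \setminus M| > |M \setminus M'|$ strictly, obstructing strong maximality. A dual construction—replacing matchings by covers and \emph{gains} by \emph{savings}—would target strong minimality of edge-covers, and the no-isolated-vertex hypothesis should cause no difficulty since each gadget can cover its own vertices.

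The hardest step, I expect, is engineering the gadgets so that the local swap is genuinely lopsided rather than merely one-for-one. With edges of size only three, the combinatorial room for a $2$-for-$1$ (or better) local trade is tight: one probably needs vertices lying in many $3$-edges simultaneously, arranged so that the only matching using every internal resource requires a global consistency condition which no actual matching can satisfy. Verification would then reduce to a careful case analysis of how an arbitrary matching meets each gadget, producing an explicit improving swap. For the edge-cover side I anticipate the gadgets to be more delicate still, since covers are less local than matchings and interact more strongly with boundary behaviour; the translation from a 3-uniform cover counterexample to a graph with no strongly minimal colouring would then be a separate reduction, presumably encoding cover choices as colour classes of a suitable auxiliary graph.
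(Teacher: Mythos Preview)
Your proposal is a research plan, not a proof: the decisive object---the gadget---is never constructed. You correctly identify that engineering a lopsided local trade using only $3$-edges is the crux, and then leave it as ``the hardest step'' without a candidate. Without an explicit gadget and a verification that \emph{every} matching admits the promised improving swap, nothing has been established. There is also a tension in the sketch itself: if a gadget $G_i$ admits an internal swap to strictly more edges while merely \emph{freeing} boundary vertices (rather than consuming them), then any matching not already using that larger internal configuration on $G_i$ fails even to be maximal, and no chain mechanism is needed; conversely, if the improvement must be paid for in a neighbouring gadget, the debt propagates along the chain and the swap threatens to be infinite, which does not witness failure of strong maximality.

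The paper's route sidesteps this by not attempting a direct $2$-for-$1$ trade in the $3$-uniform world at all. Instead it \emph{simulates} the known unbounded-edge counterexamples $H_1^*$ and $H_2^*$: each edge $e$ of size $k$ is replaced by a fixed gadget $G_e$ on $e$ together with $2k-2$ new private vertices, built from $2$- and $3$-edges, with the property that $G_e$ has a unique matching of size $k$ (the ``outer'' edges, covering all of $e$) and a matching of size $k-1$ (the ``inner'' edges) avoiding $e$ entirely. A putative strongly maximal matching of the resulting hypergraph must, on each $G_e$, use either all $k$ outer edges or exactly $k-1$ edges; reading off which gadgets are in the first state yields a matching of $H_1^*$, and the known improvement there lifts back to a finite improving swap in the $3$-uniform hypergraph. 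The lopsidedness is thus inherited from the unbounded-edge construction rather than manufactured inside the $3$-uniform one---precisely the difficulty you flagged but did not resolve. The edge-cover case is handled by the same gadget applied to $H_2^*$, and the colouring result is then a short reduction from the flag-complex closure of $H_2$.
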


Here the requirement that there be no isolated vertices is simply to ensure that there is any edge-cover at all.
Our first two results show that both parts of \Cref{conj:matching-and-cover} are false.

\begin{theorem}\label{thm:SMmatching}
    There is a $3$-uniform hypergraph $H_1$ with no strongly maximal matching.
\end{theorem}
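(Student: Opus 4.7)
My plan is to construct $H_1$ explicitly and then exhibit, for every matching $M \sseq E(H_1)$, an \emph{augmenting swap}: finite sets $A \sseq M$ and $B \sseq E(H_1) \setminus M$ with $\card{B} > \card{A}$ such that $M' \defined (M \setminus A) \union B$ is again a matching. Since then $\card{M \setminus M'} = \card{A} < \card{B} = \card{M' \setminus M}$, no matching can be strongly maximal. So the whole proof reduces to designing a hypergraph with the universal augmenting-swap property.

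I would build $H_1$ out of countably many copies $G_0, G_1, G_2, \ldots$ of a small $3$-uniform \emph{gadget} $G$, glued by ``bridge'' triples that intersect two consecutive gadgets. The gadget $G$ should carry two natural matchings $A_G$ and $B_G$ supported on largely overlapping vertex sets with $\card{B_G} = \card{A_G} + 1$, so that an internal flip from the $A_G$-configuration to the $B_G$-configuration gains exactly one edge. The bridge triples would be chosen so that the vertices left uncovered by an $A_G$-configuration in $G_i$ are precisely those available to be covered by a bridge to $G_{i+1}$, whereas the $B_G$-configuration leaves a different (and incompatible) pattern of free vertices; this asymmetry is what forces the cascade described below.

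With $H_1$ in hand, the proof of non-existence of a strongly maximal matching proceeds by case analysis on the restrictions $M \cap E(G_i)$ and on which bridges $M$ uses. If for some $i$ the restriction $M \cap E(G_i)$ is neither of the canonical $A_G$- or $B_G$-forms, then a single-gadget swap already strictly increases $\card{M}$. Otherwise $M$ is canonical in every gadget, and I would argue that no global assignment of $A_G$- and $B_G$-configurations---together with any subset of the bridges---can be globally stable: some bounded cascade across a few consecutive gadgets $G_i,\ldots,G_{i+r}$ plus one bridge produces an augmenting swap with strictly more additions than deletions.

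The main obstacle is to guarantee exhaustiveness together with the \emph{strict} inequality $\card{B} > \card{A}$. A matching $M$ is free to use any subset of the edges in each gadget and any subset of the bridge edges, so every possible local configuration must admit a strictly improving finite modification---not merely one that preserves the matching size. Realising this using only $3$-element edges (where the local degrees of freedom are much smaller than in the Ahlswede--Khachatrian counterexample for non-uniform hypergraphs) is the crux of the construction: the gadget and the bridges must be calibrated delicately so that the cascade always \emph{gains} rather than merely \emph{rotates} an edge.
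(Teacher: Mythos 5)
Your plan is genuinely different from the paper's, but it is not yet a proof: you do not actually construct the gadget $G$, the bridge triples, or the hypergraph $H_1$, and you explicitly flag the crux (``the gadget and the bridges must be calibrated delicately so that the cascade always gains rather than merely rotates an edge'') as unresolved. A proof by construction must exhibit the construction; as written, this is a research proposal, not a proof.

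Beyond the missing construction, the proposed \emph{architecture} faces a structural obstacle that the paper's route is designed to avoid. A chain of bounded gadgets $G_0, G_1, G_2, \ldots$ glued by bridges is essentially locally finite, and in such a hypergraph a matching that is locally maximum in each gadget (say, taking the larger $B_G$-configuration everywhere and no bridges, if $B_G$ covers the bridge-endpoints) has a strong tendency to be strongly maximal: any finite modification touches only finitely many gadgets, and you cannot ``borrow a surplus from infinity'' the way one does in cardinality arguments. To defeat this you would need the locally maximum configurations to be mutually incompatible across bridges in a way that propagates a deficit forever, and you give no mechanism that achieves this with $3$-edges. By contrast, the paper starts from the Ahlswede--Khachatrian hypergraph $H_1^*$ (vertices $\NN$, edges the finite sets $E$ with $\card{E} = \min E$), whose vertices have infinite degree, and replaces each edge $e$ of size $k$ by a path-like gadget $G_e$ whose unique matching of size $k$ is the set of ``outer'' edges covering $e$, while the ``inner'' edges form a matching of size $k-1$ disjoint from $e$. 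A strongly maximal matching $M$ of the resulting hypergraph must restrict to either of these two patterns in each gadget (\Cref{lem:gadget-SMmatching}); reading off which gadgets use the outer pattern gives a matching $M^*$ of $H_1^*$, and an improving move for $M^*$ in $H_1^*$ can be lifted, with finitely many auxiliary flips to inner patterns, to an improving move for $M$. The whole weight of the argument thus rests on the known unbounded-edge counterexample rather than on a new calibrated chain. If you want to salvage your chain idea you would at minimum need to produce the explicit gadget and bridge edges, verify that \emph{every} (not just canonical) restriction $M \cap E(G_i)$ admits an improving swap, and show the cascade terminates with a net gain; none of these steps is present.
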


\begin{theorem}\label{thm:SMedgecover}
    There is a $3$-uniform hypergraph $H_2$ with no isolated vertices and no strongly minimal edge-cover.
\end{theorem}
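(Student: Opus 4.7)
My plan is to construct $H_2$ as a union of infinitely many copies of a 3-uniform gadget $G$, linked through a common trunk of vertices, so that there are no isolated vertices and every edge-cover of $H_2$ admits an explicit finite local improvement within some copy. Because any finite 3-uniform hypergraph has a strongly minimal edge-cover (simply take one of minimum cardinality), the gadget $G$ itself must be infinite. A natural candidate is an overlapping chain: take vertices $v_0, v_1, v_2, \ldots$ together with auxiliary vertices $w_i, w_i'$, and 3-edges $e_i = \{v_i, v_{i+1}, v_{i+2}\}$ together with parallel alternatives $f_i = \{v_i, w_i, w_i'\}$ providing several ways to cover each $v_i$. The exact choice of $G$ should be such that any cover's restriction to $G$ has a strictly improving swap removing $k+1$ edges and adding at most $k$.

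The verification would then proceed in three steps. First I would prove a \emph{local swap lemma} for $G$: given any edge-cover $C$ of $H_2$ and its restriction $C_n$ to the $n$-th copy $G_n$, there is a modification $C_n'$ of $C_n$ (differing in only finitely many edges) which still covers $G_n$ and satisfies $\card{C_n \setminus C_n'} > \card{C_n' \setminus C_n}$. Second, I would show that such swaps can be performed simultaneously across all copies without interfering with one another, because the copies share only a trunk of vertices that each copy already covers on its own. Third, combining the swaps across all copies produces a cover $C'$ of $H_2$ with $\card{C \setminus C'} = \aleph_0$ but $\card{C' \setminus C}$ either finite or of strictly smaller cardinality, refuting strong minimality of $C$.

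The main obstacle is the design of $G$ itself. The analogous counterexample of van der Zypen for edge-covers relies essentially on edges of unbounded size; translating his flexibility into a 3-uniform gadget amounts to replacing each "large swap" by a finite tree of 3-edge swaps whose net cost is still negative. Making this work requires the gadget to have enough parallel covering options at each vertex that redundancy is unavoidable, while having enough overlap between consecutive edges that the redundancy can be shifted along the chain by trades that favour removal over insertion. A secondary obstacle is ensuring the argument handles wild edge-covers (those of maximum cardinality, or with complicated infinite symmetric differences from the "natural" cover); this is usually done by working with set-theoretic cardinalities and observing that $\card{C \setminus C'}$ infinite together with $\card{C' \setminus C}$ finite is already a strict refutation of strong minimality.
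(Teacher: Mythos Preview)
Your proposal has a genuine gap: the gadget is not actually constructed, and the candidate you suggest does not work. In your chain with edges $e_i=\{v_i,v_{i+1},v_{i+2}\}$ and $f_i=\{v_i,w_i,w_i'\}$, each auxiliary vertex $w_i$ lies in exactly one edge, namely $f_i$, so every edge-cover must contain every $f_i$. But $\{f_i:i\ge 0\}$ already covers all vertices and is clearly strongly minimal, so the local swap lemma fails for this $G$. More importantly, the three-step architecture is circular: if step one (a finite strict improvement inside some copy $G_n$) holds for every cover $C$, then that single swap already shows $C$ is not strongly minimal, and a single copy of $G$ is the desired counterexample---the trunk and the other copies add nothing. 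Conversely, step three as stated is false: combining infinitely many finite swaps, each of which adds at least one edge, yields $\card{C'\setminus C}=\aleph_0$, not a smaller cardinality, so no violation of strong minimality is obtained that way. The entire content of the problem is therefore hidden in ``design of $G$'', which you leave open.

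For contrast, the paper does not try to build a single infinite $3$-uniform gadget with this property. Instead it starts from van der Zypen's counterexample $H_2^*$ (all finite subsets of $\NN$), and replaces each large edge $e$ by a \emph{finite} gadget $G_e$ whose edges have size $2$ or $3$. The gadget is engineered so that, in any strongly minimal cover, $G_e$ contributes either its $\card{e}$ outer edges or its $\card{e}-1$ inner edges; reading off which gadgets are ``outer'' recovers an edge-cover $C^*$ of $H_2^*$, and any witness $D^*$ to the non-minimality of $C^*$ is then lifted back, gadget by gadget, to a witness against $C$. The point is that the gadget does not itself admit an improvement; it merely encodes one bit (use $e$ or not) at a cost difference of exactly one edge, so that the known unbounded-edge counterexample can be simulated with edges of size at most $3$.
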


We remark here that, by taking a disjoint union of \(H_1\) and \(H_2\), we obtain a 3-uniform hypergraph which has neither a strongly maximal matching not a strongly minimal edge-cover.

Covers and matchings are not the only objects for which one might consider strong minimality and maximality.
Indeed, Aharoni, Berger, Georgakopoulos, and Spr\"{u}ssel \cite{ABGS08} also highlighted the following problem.

\begin{conjecture}
\label{conj:colouring}
    In every graph there exists a strongly minimal cover of the vertex set by independent sets.
\end{conjecture}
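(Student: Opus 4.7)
The plan is to disprove \Cref{conj:colouring}, following the route indicated by the abstract: derive a counterexample from the hypergraph $H_2$ of \Cref{thm:SMedgecover} by constructing a graph $G$ whose covers by independent sets are in a strong-minimality-preserving correspondence with edge-covers of $H_2$.

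The natural candidate is $V(G) = V(H_2)$, with $u$ and $v$ adjacent in $G$ if and only if no edge of $H_2$ contains both. Every edge of $H_2$ is then an independent set of $G$, so every edge-cover of $H_2$ is an independent-set cover of $V(G)$. For the reverse direction I would need the maximal independent sets of $G$ to coincide with the edges of $H_2$, which amounts to two closure properties on $H_2$: any three pairwise-covered vertices form an edge (``triangle-closedness''), and no four vertices are pairwise covered. Given these, suppose for contradiction that $G$ admits a strongly minimal independent-set cover $\cI^\star$. A short replacement argument then shows that each $I \in \cI^\star$ may be enlarged to a maximal independent set---equivalently, to an edge of $H_2$---without destroying strong minimality: the crucial point is that two distinct elements of $\cI^\star$ cannot extend to the same edge, since one could then be deleted, contradicting strong minimality. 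Thus we may assume $\cI^\star \sseq E(H_2)$, so $\cI^\star$ is an edge-cover of $H_2$. It is moreover strongly minimal as an edge-cover, for every edge-cover $F$ of $H_2$ is also an independent-set cover of $V(G)$, and so $|\cI^\star \setminus F| \leq |F \setminus \cI^\star|$ by strong minimality of $\cI^\star$. This contradicts \Cref{thm:SMedgecover}.

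The main obstacle is to ensure the two closure properties on $H_2$. The hypergraph underlying \Cref{thm:SMedgecover} may produce ``phantom triangles'' or ``phantom tetrahedra''---pairwise-covered triples or quadruples that are not themselves edges---and these would give spurious maximal independent sets of $G$ outside the intended correspondence. The approach I would take is to revisit the construction of $H_2$ and modify it at each such phantom configuration (for instance by inserting new vertices that separate offending pairs, or by pruning the edges appropriately) while re-verifying that the modified hypergraph still admits no strongly minimal edge-cover. Showing that this can be done compatibly with the delicate obstruction at the heart of \Cref{thm:SMedgecover} is where most of the work is likely to lie; given it, the translation between independent-set covers and edge-covers is essentially formal.
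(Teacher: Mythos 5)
Your framework is the same as the paper's, and the final reduction from colourings to edge-covers is on the right track, but you leave a genuine gap precisely at the step you flag as ``where most of the work is likely to lie,'' and the fix is much simpler than a case-by-case modification of $H_2$.

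The paper's key move is to pass to the \emph{down-closure} $H_2^+ = H_2 \cup \{e : e \subset e' \in H_2\}$, and observe that $H_2^+$ is a flag complex. This is exactly your two closure properties in disguise: 2-determinedness gives both ``any three pairwise-covered vertices form an edge of $H_2^+$'' and ``no four vertices are pairwise covered'' (the latter because $H_2$ has edges of size at most $3$). These properties genuinely do hold for the given $H_2$ because of the gadget structure: no edge of a gadget contains two vertices of the underlying edge $e \in H_2^*$, each added vertex $v_i^{\pm}(e)$ lies in exactly one inner and one outer edge, and the only size-$3$ edges are outer edges $\{v_i, v_i^-, v_i^+\}$. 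A short case analysis on how many of $a,b,c$ are original vertices rules out phantom triangles and tetrahedra. So there is no need to ``insert new vertices'' or ``prune edges'' as you propose, and doing so would risk breaking the argument for \Cref{thm:SMedgecover}; it is enough to verify, as the paper does, that $H_2^+$ is already a flag complex and that $H_2^+$ inherits the absence of a strongly minimal edge-cover from $H_2$ (since one could enlarge the small edges of any such cover to edges of $H_2$).

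A secondary simplification you miss: once $G$ is the complement of the $2$-shadow of $H_2^+$, the independent sets of $G$ are \emph{exactly} the edges of $H_2^+$ (not just the maximal ones), because $H_2^+$ is down-closed and 2-determined. So there is no need for a replacement argument enlarging each colour class to a maximal independent set---the colour classes are already edges of $H_2^+$, forming an edge-cover directly, and the strong-minimality transfer is then immediate. Your enlargement step is not wrong in spirit, but it introduces extra obligations (distinct classes extending to the same edge, preservation of disjointness, etc.) that the down-closure viewpoint avoids.
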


The structure posited to exist by \Cref{conj:colouring} is referred to by Aharoni and Berger in \cite{AB11} as a \emph{strongly minimal colouring}, in that such a strongly minimal cover would be a proper vertex-colouring of the graph satisfying a strongly minimal condition on the colour classes.

Problems of this form were also considered in Aharoni and Berger's seminal paper \cite{AB09} proving Menger's theorem for infinite graphs.
Indeed, alongside \Cref{conj:matching-and-cover}, they also conjectured that every \emph{flag complex}---a hypergraph which is down-closed (every subset of an edge is an edge) and 2-determined (if all 2-element subsets of a set are edges, then so is that set)---has a strongly minimal edge-cover.
We show that the above problem of flag complexes is equivalent to \Cref{conj:colouring}, and thus deduce the following result.

\begin{theorem}\label{thm:SMcolouring}
    There is a graph $G$ with no strongly minimal colouring.
\end{theorem}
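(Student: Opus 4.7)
The plan is in two stages: first I would establish an equivalence between Conjecture~\ref{conj:colouring} and a statement about flag complexes, and then produce a counterexample in the flag-complex world by adapting the construction of $H_2$ from Theorem~\ref{thm:SMedgecover}.

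For the equivalence, the dictionary runs as follows. The edges of a flag complex $F$ are exactly the finite cliques of its $1$-skeleton $F^{(1)}$, so an edge-cover of $F$ is precisely a cover of $V(F)$ by cliques of $F^{(1)}$, equivalently by independent sets of the complement graph $\overline{F^{(1)}}$. Conversely, given a graph $G$, the flag complex on $V(G)$ whose edges are the finite independent sets of $G$ has edge-covers in bijection with covers of $V(G)$ by independent sets, i.e.\ with proper colourings of $G$ (allowing repeated colour classes). Under this bijection the symmetric difference between two covers matches exactly, so strong minimality of an edge-cover of $F$ corresponds verbatim to strong minimality of a colouring of $\overline{F^{(1)}}$. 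This shows Conjecture~\ref{conj:colouring} is equivalent to the assertion that every flag complex with no isolated vertices has a strongly minimal edge-cover.

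For the counterexample, I would start from $H_2$ and form the graph $G^*$ on $V(H_2)$ whose edges are all pairs contained in a common $3$-edge of $H_2$; let $F^\ast=F(G^*)$ be its flag complex. The edges of $F^\ast$ include the designed triangles (the $3$-edges of $H_2$), all their non-empty subsets, and any further cliques of $G^*$. The task is to show that if $H_2$ is chosen carefully, every edge-cover of $F^\ast$ still admits a local improvement in the spirit of the argument behind Theorem~\ref{thm:SMedgecover}, so $F^\ast$ has no strongly minimal edge-cover. Taking $G=\overline{(F^\ast)^{(1)}}$ then gives, by the equivalence of Stage~1, a graph with no strongly minimal colouring.

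The main obstacle is this second stage: the extra edge-cover options in $F^\ast$—singletons, $2$-subsets of triangles, and potentially larger cliques of $G^*$—must not enable a strongly minimal edge-cover absent in $H_2$. I would address this by (a) arranging that the $2$-shadow of $H_2$ is ``rigid'', containing no $K_4$ and with every pair belonging to a unique prescribed triangle, which restricts the edges of $F^\ast$ to those already catalogued, and (b) upgrading the improvement argument used for Theorem~\ref{thm:SMedgecover} so that it still succeeds when a triangle is split into a pair and a singleton, or into three singletons, and vice versa. The delicate bookkeeping between these size-$1$, size-$2$, and size-$3$ replacements—showing that the symmetric-difference inequality characterising strong minimality can always be violated—is where the real work lies; the translation in Stage~1 itself is essentially formal.
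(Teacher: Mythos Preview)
Your plan is the paper's route: pass through the flag-complex/colouring dictionary and build the flag complex out of \(H_2\). Two points are worth flagging.

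First, a slip in Stage~2: \(H_2\) is not \(3\)-uniform---each gadget also carries the \(2\)-edges \(v_1v_1^+\), \(v_kv_k^-\) and all inner edges \(v_i^+v_{i+1}^-\). If \(G^*\) is built only from pairs inside \(3\)-edges, vertices such as \(v_1^+(e)\) become isolated and no edge-cover exists. You should take \(G^*\) to be the full \(2\)-shadow of \(H_2\); then your flag complex \(F^*\) is exactly the down-closure \(H_2^+\) of \(H_2\), and the ``rigidity'' you propose to check (no \(K_4\), each pair in a unique maximal edge) is precisely the verification that \(H_2^+\) is \(2\)-determined, which is an easy inspection of the gadget.

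Second, the ``delicate bookkeeping'' you anticipate is not needed. The paper bypasses any reworking of the improvement argument with a one-line reduction: given a strongly minimal edge-cover \(C^+\) of \(H_2^+\), enlarge each member to an edge of \(H_2\) containing it. In a strongly minimal \(C^+\) no two members can lie inside a common edge of \(H_2\) (else replacing both by that edge would witness non-minimality), so this enlargement is injective and its image is a strongly minimal edge-cover of \(H_2\), contradicting Theorem~\ref{thm:SMedgecover}. This single observation handles simultaneously the singleton, pair, and triangle cases you were preparing to treat separately.
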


Aharoni and Berger also stated the following problem, attributed to Tardos (see \cite[Problem 10.4]{AB09}).

\begin{question}
\label{q:tardos}
    Is it true that, in every hypergraph with finite edges, there exists a matching \(M\) such that no matching \(M'\) exists for which \(\card{M\setminus M'} = 1\) and \(\card{M' \setminus M} = 2\)?
\end{question}

Our last result resolves \Cref{q:tardos} in the negative.

\begin{theorem}\label{thm:tardos}
    There is a hypergraph \(H_3\) such that, for every matching \(M\) of \(H_3\), there is a matching \(M'\) of \(H_3\) with \(\card{M\setminus M'} = 1\) and \(\card{M' \setminus M} = 2\).
\end{theorem}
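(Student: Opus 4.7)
The plan is to construct an explicit countable hypergraph $H_3$ with finite edges for which every non-empty matching admits the required $1$-for-$2$ swap. Given a matching $M$, we must exhibit an edge $e \in M$ and two edges $f_1, f_2$ of $H_3$, neither belonging to $M$, such that $f_1$ and $f_2$ are disjoint from each other and from every edge of $M \setminus \{e\}$. Equivalently, the sub-hypergraph of $H_3$ induced on $V(e) \cup \bigl(V(H_3) \setminus V(M)\bigr)$ must contain two disjoint edges, at least one of which is not already in $M$; the whole problem reduces to arranging that some such edge $e \in M$ always exists.

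I would build $H_3$ as a hierarchical hypergraph whose vertex set $V$ is partitioned into countably many ``levels'' of unbounded size, with edges of widely varying cardinalities placed both within each level and bridging levels in a structured way. Two design principles would guide the construction. First, every sufficiently large edge $e$ should be expressible as a disjoint union of two smaller edges of $H_3$, providing a canonical \emph{internal splitting} of $e$ that works even with no free vertices available. Second, the hypergraph should contain ``witness'' vertices that lie exclusively in large edges, so that any matching using only small edges is forced to leave these witnesses uncovered, providing a reservoir of free vertices in the free neighbourhood of any chosen $e \in M$.

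The verification would then proceed by case analysis on $M$. If $M$ contains a large edge $e$ admitting an internal split into two disjoint edges $f_1, f_2$ of $H_3$, we take those as the new edges and the swap is immediate. If instead all edges of $M$ are small, then by the second design principle $M$ fails to cover several witnesses, and we can find an edge $e \in M$ together with two disjoint new edges $f_1, f_2$ supported on $V(e)$ and the witness vertices inside $V(H_3) \setminus V(M)$; here the richness of small edges on mixed sets of level-vertices and witnesses is what makes $f_1, f_2$ exist. Combining the two cases handles every non-empty matching.

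The main obstacle I expect is the ``tight'' matching case: preventing the existence of a matching that saturates $V(H_3)$ entirely using only edges too small to split internally, since such a matching would leave no free vertices available for the $1$-for-$2$ swap and would also not admit an internal split. The construction must therefore build in a delicate structural obstruction ensuring that every saturating matching contains at least one splittable large edge, while simultaneously remaining rich enough in small edges to accommodate the local augmentations needed for sparse matchings. Reconciling these two requirements in a single hypergraph is the central technical challenge and is likely what forces the hierarchical or recursive nature of $H_3$.
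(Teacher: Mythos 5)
Your proposal is a plan, not a proof, and the gap you acknowledge at the end is exactly the part that matters. You never write down a concrete hypergraph, and you close by conceding that ``reconciling these two requirements in a single hypergraph is the central technical challenge'' --- which leaves the theorem unproved. The two design principles you list (large edges that split internally into smaller edges, and witness vertices forcing free space) would both require verification against arbitrary matchings, and the case analysis ``some large edge vs.\ all small edges'' is not obviously exhaustive or well-posed without a concrete construction; for example, it is unclear what ``large'' and ``small'' mean, whether a matching could contain a mix that defeats both branches, or whether internal splitting preserves disjointness against the \emph{rest} of $M$.

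For contrast, the paper sidesteps the whole tight-matching worry by a single geometric construction that needs no case split. The vertex set is $\NN\times\NN$ and each edge $e_{x,y}$ is a $\Gamma$-shape (a vertical column of height $y$ at abscissa $x$ together with a horizontal row at height $y$ running out to abscissa $2x$). Any matching lists as $e_{x_1,y_1},e_{x_2,y_2},\dots$ with $x_1<x_2<\cdots$, and one takes the first $n$ with $y_{n+1}\geq y_n$; disjointness then forces $x_{n+1}\geq 2x_n+1$, so the two columns $2x_n-1$ and $2x_n$ meet only the edge $e_n$ among $M$. Replacing $e_n$ with two $\Gamma$-edges rooted at those columns and reaching higher than $y_{n+1}$ gives the required $M'$. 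Notice that this is not an ``internal split'' of $e_n$ at all --- the new edges use mostly fresh vertices far from $e_n$ --- and there are no designated witness vertices. The construction guarantees by its shape that every matching always leaves a suitably located pair of free columns, so there is no tight case to rule out. If you want to salvage your approach, the honest assessment is that you would need to actually build the hierarchical hypergraph and prove that its matchings cannot be tight; as it stands the argument has no content at the point where the theorem is decided.
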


Finally, \Cref{conj:matching-and-cover} was also repeated in \cite{Aha22}, where it was also conjectured that every hypergraph has a strongly minimal vertex-cover, but this is easily seen to be false, as shown in \Cref{subsec:SMvertexcover}.

The constructions for \Cref{thm:SMmatching,thm:SMedgecover} both begin with the counterexamples found for the case in which edges may have unbounded size, which are presented in \Cref{subsec:known}.
Both constructions work by replacing the edges in the known counterexamples with a `gadget', which simulates the large edges with only 3-edges and 2-edges, before the 2-edges are later replaced with 3-edges.
This gadget is described in \Cref{sec:gadget}.


\subsection{Known counterexamples for unbounded edge size}
\label{subsec:known}

As discussed in the introduction, \Cref{conj:matching-and-cover} was initially stated for strongly maximal matchings and strongly minimal covers in hypergraphs without the bound on the size of the edges, but these were shown to be false by Ahlswede and Khachatrian and by van der Zypen respectively. 
For completeness, we include these counterexamples.
We first consider the case of strongly maximal matchings, due to Ahlswede and Khachatrian \cite{AK96}.

\begin{theorem}
\label{thm:SMmatching_unbdd}
    Let $H_1^*$ be the hypergraph on vertex set $\NN$ with edge set consisting of those finite subsets $E\sseq \NN$ for which $\card{E} = \min E$.
    Then $H_1^*$ has no strongly maximal matching.
\end{theorem}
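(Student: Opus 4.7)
The plan is to show that for every matching $M$ of $H_1^*$, one can construct another matching $M'$ with $\card{M' \setminus M} > \card{M \setminus M'}$, contradicting strong maximality. The argument splits into two cases depending on whether the set $U$ of vertices uncovered by $M$ is infinite or finite. If $U$ is infinite, let $k = \min U$ and pick any $k - 1$ further uncovered vertices exceeding $k$; together with $k$ they form a valid edge of $H_1^*$ (since its size equals its minimum), disjoint from $M$, so $M' = M \cup \{\text{that edge}\}$ gives $\card{M' \setminus M} = 1 > 0 = \card{M \setminus M'}$. All the remaining work is in the finite-$U$ case.

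Assume $U$ is finite. Then $M$ must be infinite; enumerate its edges as $e_1, e_2, \ldots$ by their minima $n_1 < n_2 < \cdots$ and set $S = \{n_i\}$. Each vertex of $\NN \setminus S$ is either in $U$ or a non-minimum (``padding'') element of some edge of $M$, giving the identity $\card{\NN \setminus S} = \card{U} + \sum_{s \in S}(s - 1)$. Since the right-hand side is infinite for any infinite $M$, $S$ has infinitely many gaps in $\NN$; equivalently, the non-decreasing quantity $\beta(n_i) := n_i - i$ (the number of gaps of $S$ in $[1, n_i]$) tends to infinity. Choose $K \geq \card{U}$ large enough that $\sum_{i=1}^K \beta(n_i) \geq K + 1 - \card{U}$, which is possible by the divergence of $\beta$; this choice also forces $n_K \geq K + 1$.

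Now remove $F = \{e_1, \ldots, e_K\}$ from $M$ and let $T = V(F) \cup U$ be the resulting pool of available vertices. Every vertex of $[1, n_K]$ covered by $M$ belongs to an edge of minimum at most $n_K$ and hence to some $e_i \in F$, so $[1, n_K] \subseteq T$. Using $n_i = i + \beta(n_i)$,
\[
  \card{T} = \tfrac{K(K+1)}{2} + \sum_{i=1}^K \beta(n_i) + \card{U} \geq \tfrac{(K+1)(K+2)}{2}.
\]
I construct $K + 1$ new disjoint edges with minima $1, 2, \ldots, K + 1$, all in $[1, n_K] \subseteq T$. The remaining $\card{T} - (K+1)$ vertices of $T$ strictly exceed $K + 1$ and number at least $\binom{K+1}{2}$, exactly the total padding budget; assigning any $m - 1$ of them as padding for the edge of minimum $m$ produces a disjoint family $F'$ of $K+1$ valid edges, and $M' = (M \setminus F) \cup F'$ satisfies $\card{M' \setminus M} = K + 1 > K = \card{M \setminus M'}$.

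The main obstacle is the structural input for the finite-$U$ case: one needs that $S$ has infinitely many gaps, so that $\sum_{i=1}^K \beta(n_i) + \card{U}$ eventually exceeds $K$. This is enforced by the padding-counting identity above, whose right side is infinite for any infinite matching in $H_1^*$ (every edge of size $s \geq 2$ contributes $s - 1$ padding slots). Once the divergence of $\beta$ is secured, choosing $K$ and executing the repacking reduce to the size estimate $\card{T} \geq \binom{K+2}{2}$, which is immediate from our choice of $K$.
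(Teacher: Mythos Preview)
Your proof is correct, but it takes a considerably more elaborate route than the paper's. The paper dispatches the problem with a direct two-for-three swap: given an infinite matching $M$, choose any edge $E_1=\{v_1<v_2<\cdots<v_t\}$ with $t\geq 3$ (such an edge exists because at most one edge of each size $\leq 2$ can lie in a matching), and then choose $E_2\in M$ with $|E_2|=\min E_2\geq v_2+v_3$, which is possible since the minima of edges in $M$ are unbounded. Then $|E_1\cup E_2|\geq v_1+v_2+v_3$, and since the three smallest elements of $E_1\cup E_2$ are $v_1,v_2,v_3$, one can repack $E_1\cup E_2$ into three pairwise disjoint edges with minima $v_1,v_2,v_3$. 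Your argument instead sets up a general $K$-for-$(K{+}1)$ exchange driven by the gap-counting identity and the divergence of $\beta(n_i)=n_i-i$. This is more systematic, and your explicit treatment of the infinite-$U$ case is something the paper glosses over by simply writing ``take some infinite matching''; but the paper's version is two lines rather than a page. One minor nitpick: your closing equality $|M'\setminus M|=K+1$, $|M\setminus M'|=K$ tacitly assumes $F'\cap F=\emptyset$, which could fail if some $f_m$ happens to coincide with an $e_j$; this does not affect the conclusion, since $|F'|>|F|$ already gives $|M'\setminus M|-|M\setminus M'|=|F'\setminus F|-|F\setminus F'|=1>0$.
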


To see that $H_1^*$ has no strongly maximal matching, take some infinite matching $M$, and take $E_1,E_2\in M$ such that, letting $E_1=\set{v_1,v_2,\dotsc,v_t}$, we have $\min E_2 = \card{E_2} \geq v_2 + v_3$.
Then $\card{E_1\union E_2} = \card{E_1} + \card{E_2} \geq v_1 + v_2 + v_3$, and so we can find pairwise disjoint $F_1,F_2,F_3\sseq E_1\union E_2$ with $\card{F_i} = \min F_i = v_i$ for all $i=1,2,3$.

We may also note that $H_1^*$ also has no strongly minimal vertex-cover, as any vertex-cover $A\sseq \NN$ is cofinite, and if $\NN\setminus A = \set{v_1,\dotsc,v_n}$, then it is not hard to see that $(A\union\set{v_1})\setminus\set{v_n + 1, v_n + 2}$ is also a vertex-cover.

We next give van der Zypen's construction for strongly minimal edge-covers \cite{vdZ22}.

\begin{theorem}
\label{thm:SMcover_unbdd}
    Let $H_2^*$ be the hypergraph on vertex set $\NN$ with edge set consisting of all finite subsets of $\NN$.
    Then $H_2^*$ has no strongly minimal edge-cover.
\end{theorem}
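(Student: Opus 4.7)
The plan is to exploit the fact that in $H_2^*$ the edge set is closed under finite union: if $E_1$ and $E_2$ are finite subsets of $\NN$, so is $E_1 \union E_2$. This means that any two edges of a putative strongly minimal edge-cover can be replaced by their union without losing coverage, which should already give a strict improvement.

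More precisely, let $C$ be any edge-cover of $H_2^*$. Since every edge of $H_2^*$ is finite while $\bigunion C = \NN$, the cover $C$ must be infinite, so we may pick two distinct edges $E_1, E_2 \in C$. Let $F \defined E_1 \union E_2$; this is a finite subset of $\NN$ and hence an edge. Set $C' \defined (C \setminus \set{E_1, E_2}) \union \set{F}$. Since $F$ covers every vertex that $E_1$ and $E_2$ together did, $C'$ is still an edge-cover. I would then compare $\card{C \setminus C'}$ and $\card{C' \setminus C}$ to show that $C$ is not strongly minimal.

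The only subtlety is the bookkeeping when $F$ coincides with one of $E_1, E_2$ (i.e.\ when one contains the other), or when $F$ already lies in $C$. If $E_1$ and $E_2$ are incomparable and $F \notin C$, then $\card{C \setminus C'} = 2$ and $\card{C' \setminus C} = 1$; if they are incomparable but $F \in C$, then $C' = C \setminus \set{E_1,E_2}$, giving $\card{C \setminus C'} = 2$ and $\card{C' \setminus C} = 0$; and if, say, $E_2 \sseq E_1$, then $F = E_1 \in C$ so $C' = C \setminus \set{E_2}$, giving $\card{C \setminus C'} = 1$ and $\card{C' \setminus C} = 0$. In every case $\card{C \setminus C'} > \card{C' \setminus C}$, so $C$ fails the defining inequality of a strongly minimal edge-cover. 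There is essentially no obstacle here; the argument is a one-step local improvement driven entirely by the union-closedness of the edge set.
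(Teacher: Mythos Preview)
Your argument is correct and is exactly the paper's approach: pick two edges $E_1,E_2$ of the cover and replace them by $E_1\cup E_2$. Your additional case analysis (when $F$ already lies in $C$ or when $E_1,E_2$ are comparable) is more careful than the paper's sketch but not strictly necessary, since in any case $\card{C'\setminus C}\leq 1<2=\card{\{E_1,E_2\}}\leq\card{C\setminus C'}$ follows immediately.
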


To see that $H_2^*$ has no strongly minimal edge-cover, take some edge cover $C$ of $H_2^*$, and let $E_1,E_2\in C$ be two arbitrary edges.
Then $(C \setminus \set{E_1,E_2})\union\set{E_1\union E_2}$ is also an edge-cover of $H_2^*$.


\section{The gadget}
\label{sec:gadget}
We construct the hypergraphs of \Cref{thm:SMmatching,thm:SMedgecover} from the counterexamples in \Cref{subsec:known}, by `simulating' edges of size larger than \(3\) using a certain gadget with smaller edges. Given an edge \(e\), we now define the graph \(G_e\), the \emph{gadget on} \(e\).
First, fix an arbitrary labelling \(\{v_1,\dots,v_k\}\) of \(e\). We add \(2k-2\) new vertices in total, which will be used only in the gadget on \(e\): \(v_i^+=v_i^+(e)\) for \(1\leq i \leq k-1\), and \(v_i^-=v_i^-(e)\) for \(2\leq i \leq k\). The gadget \(G_e\) will have vertex set 
\[V\left(G_e\right)=e\cup\{v_i^+: 1\leq i \leq k-1\}\cup \{v_i^-:2\leq i \leq k\},\]
and edge set
\[E\left(G_e\right)=\{v_i^+v_{i+1}^-: 1\leq i \leq k-1\}\cup\{v_i^{\vphantom{+}}v_i^-v_i^+: 2\leq i \leq k-1\}\cup \{v_1^{\vphantom{+}} v_1^+ ,v_k^{\vphantom{+}} v_k^-\}.\]

We call the \(k\) edges incident to vertices of \(e\) the \emph{outer edges}, and call the remaining \(k-1\) edges the \emph{inner edges}, drawn in red in \Cref{fig:gadget}.

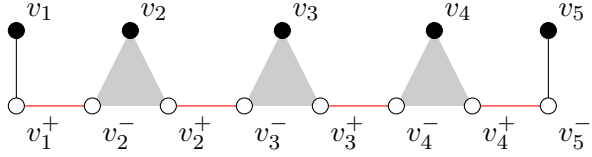
\begin{figure}[H]
\centering
    \begin{tikzpicture}
        \def\n{5}
        \def\nminusone{4}

        \foreach \x in {3,...,\nminusone} \draw[red] (2*\x-1.5,0) -- (2*\x-0.5,0);
        \foreach \x in {2,...,\nminusone}
        \fill[black!20!white] (2*\x,1) -- (2*\x+0.5,0) -- (2*\x-0.5,0) -- (2*\x,1);

        \filldraw[black] (2.5,1) circle (3pt) node[anchor=south west]{\(v_1\)};\filldraw[black] (2*\n-0.5,1) circle (3pt) node[anchor=south west]{\(v_\n\)};
        
        \foreach \x in {2,...,\nminusone}
        \filldraw[black] (2*\x,1) circle (3pt) node[anchor=south west]{\(v_\x\)};
        \draw (2*\n-0.5,1) -- (2*\n-0.5,0);
        \draw[red] (2*\n-0.5,0) -- (2*\n - 1.5, 0);
        \draw[red] (3.5,0) -- (2.5,0);
        \draw (2.5,0) -- (2.5,1);
        \draw[black,fill=white] (2.5,0) circle (3pt) node[anchor=north west]{\(v_1^+\)};
        \draw[black,fill=white] (2*\n-0.5,0) circle (3pt) node[anchor=north west]{\(v_\n^-\)};
        \foreach \x in {2,...,\nminusone} {
        \draw[black,fill=white] (2*\x-0.5,0) circle (3pt) node[anchor=north west]{\(v_\x^-\)};
        \draw[black,fill=white] (2*\x+0.5,0) circle (3pt) node[anchor=north west]{\(v_\x^+\)}; }
        
    \end{tikzpicture}
    \caption{The gadget \(G_e\) on the edge \(e=\{v_1,\dots,v_5\}\), with inner edges marked in red, and outer edges in grey and black.\label{fig:gadget}}
\end{figure}

A key observation is that the outer edges are a matching of \(G_e\) of size \(k\), and that this is the unique largest matching on \(G_e\). However, notice also that the inner edges form a matching of size \(k-1\) which does not meet any of the points of \(e\).

The gadget will be used by replacing every edge \(e\) of a hypergraph \(H^*\) with the gadget \(G_e\), to form a hypergraph in which every edge has size at most 3, no matter how large the edges were in \(H^*\).


\section{Proofs}
\label{sec:proofs}

We now produce our counterexamples in turn, proving \Cref{thm:SMmatching,thm:SMedgecover} in \Cref{subsec:matching,subsec:edgecover} respectively.
Then, in \Cref{subsec:colouring} \Cref{thm:SMcolouring} by means of constructing a flag complex with no strongly minimal edge-cover.
In \Cref{subsec:SMvertexcover}, we give a short construction of a graph with no strongly minimal vertex-cover.
Finally, in \Cref{subsec:tardos}, we prove \Cref{thm:tardos}.


\subsection{Strongly maximal matchings}
\label{subsec:matching}

We first construct a hypergraph \(H_1\) with no strongly maximal matching in which all edges have size at most \(3\). We do this by replacing each edge \(e\) of \(H_1^*\) with a gadget \(G_e\) on the vertices of that edge. So
\[H_1=\bigunion \big\{G_e:e\in H_1^*\big\}.\]
Recall that the sets of added vertices \(v_i^\pm(e)\) for each gadget are disjoint: so each new vertex is only used in one edge. This graph will essentially `simulate' the graph \(H_1^*\).

We first make the following observation about the structure of strongly maximal matchings of \(H_1\).
\begin{lemma}\label{lem:gadget-SMmatching}
    Suppose that \(M\) is a strongly maximal matching of \(H_1\). Then for any edge \(e\in H_1^*\) of size \(k=|e|\), the intersection \(M\cap G_e\) either has size \(k-1\), or is exactly the \(k\) outer edges.
\end{lemma}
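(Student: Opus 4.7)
The plan is to compare $M$ to a matching obtained by swapping the restriction of $M$ to $G_e$ for the inner edges of $G_e$. The key feature of the inner edges is that they use only added vertices of the form $v_i^\pm(e)$, which (by construction) lie in no other gadget; so such a swap is always ``safe'' with respect to the rest of $M$.

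Concretely, let $N \defined M \cap G_e$ and let $I_e$ denote the set of $k-1$ inner edges of $G_e$. Consider $M' \defined (M \setminus N) \cup I_e$. No edge of $I_e$ can meet any edge of $M \setminus N$, since every vertex of $I_e$ is an added vertex of $G_e$ and such vertices appear in no edge outside $G_e$; hence $M'$ is a matching. A short computation gives
\[\card{M' \setminus M} - \card{M \setminus M'} = \card{I_e \setminus N} - \card{N \setminus I_e} = \card{I_e} - \card{N} = (k-1) - \card{N}.\]
Strong maximality of $M$ forces this quantity to be non-positive, so $\card{N} \geq k-1$.

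For the upper bound, I would simply invoke the observation stated just before the lemma, that the $k$ outer edges form the unique largest matching of $G_e$. This gives $\card{N} \leq k$, with equality only when $N$ consists precisely of the outer edges. Combined with the bound from the previous paragraph, we obtain the two cases of the lemma.

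The argument is short and there is no substantial obstacle. The only point requiring care is the verification that the inner matching conflicts with nothing outside $G_e$, which in turn reduces immediately to the fact that each added vertex belongs to only one gadget. Beyond this, everything is a routine manipulation of the $\card{M \setminus M'} \geq \card{M' \setminus M}$ inequality defining strong maximality.
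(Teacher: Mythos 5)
Your proof is correct and takes essentially the same approach as the paper: both swap $M\cap G_e$ for the inner edges (which are safe because the added vertices lie in no other gadget) to deduce $\card{M\cap G_e}\geq k-1$, and both cite the observation that the outer edges form the unique largest matching for the upper bound. The only cosmetic difference is that you phrase the swap as an exact computation of $\card{M'\setminus M}-\card{M\setminus M'}=(k-1)-\card{N}$ rather than as a contradiction from the assumption $\card{M\cap G_e}<k-1$.
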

\begin{proof}
    First note that the outer edges are the unique matching of size at least \(k\). So it suffices to show the intersection has size at least \(k-1\).
    
    Suppose some \(e\) is such that \(|M\cap G_e|<k-1\). Then replacing these edges by the inner edges of \(G_e\) removed fewer than \(k-1\) edges, and adds \(k-1\). Certainly the inner edges of \(G_e\) don't intersect any other edges of \(M\), as no edges outside \(G_e\) meet these vertices. So \(M\) is not a strongly maximal matching, giving a contradiction and thus the desired result.
\end{proof}

We next note that in order to prove \Cref{thm:SMmatching}, it is enough to show that \(H_1\), which is not uniform, has no strongly maximal matching.

\begin{lemma}\label{lem:uniform-SMmatching}
    There is a \(3\)-uniform hypergraph \(H'_1\) and a bijection \(\phi_1:E(H_1)\to E(H'_1)\) such that, for all \(M\sseq E(H_1)\), the image of \(M\) under \(\phi\) is a matching of \(H'_1\) if and only if \(M\) is a matching of \(H_1\).
    
    In particular, \(H_1\) has a strongly maximal matching if and only if \(H'_1\) does.
\end{lemma}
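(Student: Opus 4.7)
The plan is to build \(H'_1\) from \(H_1\) by enlarging each \(2\)-edge with a private, fresh vertex, so that matchings correspond trivially. Concretely, for each \(e \in E(H_1)\) with \(\card{e} = 2\) I would introduce a brand-new vertex \(w_e\), choosing the \(w_e\) to be pairwise distinct and to lie outside \(V(H_1)\). Setting \(V(H'_1) = V(H_1) \union \set{w_e \st \card{e} = 2}\) and defining
\[
\phi_1(e) = \begin{cases} e \union \set{w_e} & \text{if } \card{e} = 2, \\ e & \text{if } \card{e} = 3, \end{cases}
\]
the image \(E(H'_1) \defined \phi_1(E(H_1))\) consists of \(3\)-element subsets of \(V(H'_1)\), and \(\phi_1\) is a bijection onto \(E(H'_1)\) by the distinctness of the \(w_e\).

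The matching correspondence then follows from a single observation: for any two distinct edges \(e, e' \in E(H_1)\) we have \(\phi_1(e) \inter \phi_1(e') = e \inter e'\), since \(w_e\) lies in \(\phi_1(e)\) alone (and in no element of \(V(H_1)\)), and likewise for \(w_{e'}\). Hence a family \(M \sseq E(H_1)\) is pairwise disjoint if and only if \(\phi_1(M)\) is, which is precisely the desired equivalence of matchings.

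Finally, to transfer strong maximality I would invoke the bijectivity of \(\phi_1\) once more: for matchings \(M, M' \sseq E(H_1)\), the identity \(\card{M \setminus M'} = \card{\phi_1(M) \setminus \phi_1(M')}\) and its symmetric counterpart both hold, and every matching of \(H'_1\) is of the form \(\phi_1(M)\) for some matching \(M\) of \(H_1\). Thus \(M\) is strongly maximal in \(H_1\) if and only if \(\phi_1(M)\) is strongly maximal in \(H'_1\), giving the existence statement in both directions. I do not foresee any real obstacle here: the construction is direct and the verifications are routine set-theoretic bookkeeping. The only point demanding care is ensuring that the auxiliary vertices \(w_e\) are genuinely private to their edges, so that no unintended intersections are created when passing from \(H_1\) to \(H'_1\).
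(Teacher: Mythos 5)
Your proposal is correct and follows essentially the same route as the paper: add a private new vertex to each $2$-edge of $H_1$, observe that $\phi_1(e)\inter\phi_1(e')=e\inter e'$ for distinct edges, and then transfer strong maximality via injectivity of $\phi_1$. The only cosmetic difference is that you make the bijection onto matchings of $H'_1$ explicit, whereas the paper phrases the transfer argument contrapositively (a non-strongly-maximal matching $M$ is witnessed by some $N$, and $\phi_1$ carries the witness over); these are the same argument.
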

\begin{proof}
    We construct the \(3\)-uniform hypergraph \(H_1'\) from \(H_1\) by adding to each edge \(e\) of size \(2\) a new vertex \(v_e\). For such edges, we set \(\phi_1(e)=e\cup\{v_e\}\), and for edges that already had size \(3\) we take \(\phi_1(e)=e\). 
    But each \(v_e\) is in exactly one edge, so for any two distinct edges \(e,e'\) of \(H_1\), we must have \(\phi_1(e)\cap\phi_1(e')=e\cap e'\). In particular, \(e\) and \(e'\) intersect if and only if \(\phi_1(e)\) and \(\phi_1(e')\) do.
    
    But now \(\phi_1\) and its inverse both preserve matchings. So if a matching \(M\) of \(H_1\) is not strongly maximal, this is witnessed by some other matching \(N\) with \(|N\setminus M|>|M\setminus N|\). But now
    \[\left|\phi_1[N]\setminus\phi_1[M]\right|=|N\setminus M|>|M\setminus N|=\left|\phi_1[M]\setminus\phi_1[N]\right|,\]
    so \(\phi_1[M]\) also cannot be strongly maximal. Hence we have the desired result.
\end{proof}

We now prove \Cref{thm:SMmatching}.

\begin{proof}[Proof of \Cref{thm:SMmatching}]
By \Cref{lem:uniform-SMmatching}, it is enough to prove that \(H_1\) has no strongly maximal matching. Suppose for contradiction that \(M\) is a strongly maximal matching of \(H_1\). Now we define \(M^*\) to be the set of those edges \(e\) of \(H_1^*\) such that \(M\cap G_e\) is exactly the outer edges of \(G_e\). But notice that for each edge \(e\) of \(M^*\), the vertices of \(e\) are all covered in \(M\) by edges of \(G_e\). In particular, no vertex can be in two edges of \(M^*\), so \(M^*\) is a matching. Notice that by \Cref{lem:gadget-SMmatching}, \(M\) contains \(|e|\) edges of \(G_e\) for each \(e\in M^*\), and \(|e|-1\) edges of \(G_e\) otherwise.

Then by \Cref{thm:SMmatching_unbdd}, \(M^*\) cannot be strongly maximal, and so there is a matching \(N^*\) of \(H_1^*\) such that \(|N^*\setminus M^*|>|M^*\setminus N^*|\). We now define a matching \(N\) of \(H_1\) as follows:
\begin{itemize}
    \item For \(e\in M^*\setminus N^*\), we take \(N\cap G_e\) to be exactly the inner edges of \(G_e\). Thus \(N\) uses one fewer edge of \(G_e\) than \(M\).
    \item For any other edge \(e\) of \(H_1^*\) such that \(M\cap G_e\) contains an edge that meets an edge of \(N^*\setminus M^*\), we again take \(N\cap G_e\) to be exactly the inner edges of \(G_e\). Notice that there are at most \(\left|\bigunion (N^*\setminus M^*)\right|\) such edges, and in particular there are finitely many.
    
    Note also that \(e\) is not in \(M^*\), as the only remaining such edges are also in \(N^*\), and so do not meet any other edge of \(N^*\), let alone an edge of \(N^*\setminus M^*\). Thus \(N\) uses the same number of edges of \(G_e\) as \(M\), namely \(|e|-1\).
    \item For \(e\in N^*\setminus M^*\), we take \(N\cap G_e\) to be exactly the \emph{outer} edges of \(G_e\). Now \(N\) uses one more edge of \(G_e\) than \(M\).
    \item For any other edge \(e\) of \(H_1^*\), we take \(N\cap G_e\) to be exactly the same as \(M\cap G_e\).
\end{itemize}
We now have to check that this is still a matching. Every set \(N\cap G_e\) is a matching by definition, so we just need to check that no vertex of \(H_1^*\) is contained in outer edges of \(N\) in two different gadgets \(G_e, G_{e'}\). As \(M\) is a matching, at least one of these outer edges would be in \(N\setminus M\), and so we may assume \(e\in N^*\setminus M^*\).

Then \(e,e'\) must meet, so \(e'\notin N^*\). Hence the only way \(N\) can contain an outer edge of \(G_{e'}\) is if \(N\cap G_{e'}=M\cap G_{e'}\). But now this meets \(e\), an edge of \(N^*\setminus M^*\), so in fact \(N\cap G_{e'}\) is exactly the inner edges of \(G_{e'}\). Thus in fact no vertex can be contained in outer edges of both \(G_e\) and \(G_{e'}\). Hence \(N\) is a matching.

But \(N\) differs from \(M\) on \(G_e\) for finitely many choices of \(e\), so \(N\setminus M\) and \(M\setminus N\) are both finite. And \(N\) contains one more edge of \(G_e\) than \(M\) whenever \(e\in N^*\setminus M^*\); one fewer whenever \(e\in M^*\setminus N^*\); and the same number otherwise. Hence
\[|N\setminus M|-|M\setminus N|=|N^*\setminus M^*|-|M^*\setminus N^*|>0,\]
and so \(M\) is not strongly maximal. Thus \(H_1\) has no strongly maximal matching.
\end{proof}


\subsection{Strongly minimal edge-covers}
\label{subsec:edgecover}

To prove \Cref{thm:SMedgecover}, we use the same method as in \Cref{subsec:matching}, first constructing a graph \(H_2\) in which all edges have size \(2\) or \(3\):
\[H_2=\bigunion \big\{G_e:e\in H_2^*\big\}.\]
Once again, recall that the added vertices \(v_i^\pm(e)\) are each used only in the edge \(e\). We begin with the following observation about strongly minimal edge-covers of \(H_2\).

\begin{lemma}\label{lem:gadget-SMedge}
    Suppose that \(C\) is a strongly minimal edge-cover of \(H_2\). Then for any edge \(e\in H_2^*\) of size \(k=|e|\), the intersection \(C\cap G_e\) either has size \(k\), or is exactly the inner edges.
\end{lemma}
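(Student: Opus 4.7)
The plan is to bound $|C \cap G_e|$ from both sides and then, in the lower case, pin down the structure. The key observation is that the $2(k-1)$ internal vertices $v_i^+$ and $v_i^-$ appear only in edges of $G_e$, so $C \cap G_e$ alone must cover them. Since each edge of $G_e$ contains at most two internal vertices, this immediately yields $|C \cap G_e| \ge k-1$.

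For the upper bound I would use strong minimality of $C$. If $|C \cap G_e| > k$, then replacing $C \cap G_e$ by the $k$ outer edges of $G_e$ produces another edge-cover $C'$ of $H_2$: the outer edges cover every vertex of $G_e$ (including the original vertices $v_i$), and $C$ and $C'$ coincide outside $G_e$. A short bookkeeping gives $|C \setminus C'| - |C' \setminus C| = |C \cap G_e| - k > 0$, contradicting strong minimality.

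The main obstacle is the boundary case $|C \cap G_e| = k-1$, where the content of the lemma is that $C \cap G_e$ must consist of exactly the inner edges. Here each of the $k-1$ edges must meet exactly two internal vertices, so together these edges partition the internal vertices into pairs. The end outer edges $v_1 v_1^+$ and $v_k v_k^-$ are therefore excluded (each meets only one internal), leaving as candidates only the $k-1$ inner edges $v_i^+ v_{i+1}^-$ and the $k-2$ middle outer edges $v_i v_i^- v_i^+$. Since $v_1^+$ belongs to just one candidate, namely the inner edge $v_1^+ v_2^-$, this edge must lie in $C$. A straightforward induction on $i$ then shows that every inner edge $v_i^+ v_{i+1}^-$ must lie in $C$: at each stage the middle outer edge at $v_i^+$ would reuse the already-paired vertex $v_i^-$, forcing the inner edge $v_i^+ v_{i+1}^-$ instead. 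After $k-1$ steps this accounts for all of $C \cap G_e$, giving the desired structural conclusion.
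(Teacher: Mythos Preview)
Your proof is correct and follows essentially the same approach as the paper: the lower bound via counting internal vertices, the upper bound via swapping in the outer edges, and the structural analysis in the boundary case. The only difference is that where the paper simply asserts that a size-$(k-1)$ cover of the internal vertices by edges meeting two each must be the inner edges, you spell out the induction starting from $v_1^+$; this is a welcome elaboration of what the paper leaves implicit.
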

\begin{proof}
There are \(2k-2\) added vertices which are only covered by edges of \(G_e\), each of which covers at most two. So certainly there are least \(k-1\) edges in \(C\cap G_e\). If there are exactly \(k-1\) edges, then each edge must cover exactly two added vertices, and no added vertex can be contained in two such edges. But clearly such a matching must be the \(k-1\) inner edges.

Now suppose that \(C\cap G_e\) contains more than \(k\) edges. Then we can replace these edges by the \(k\) outer edges, as these cover the whole vertex set of \(G_e\). But this would contradict strong minimality, and hence we have the desired result.
\end{proof}

We again note that in order to prove \Cref{thm:SMedgecover}, it is enough to show that \(H_2\), which is not uniform, has no strongly minimal edge-cover.

\begin{lemma}\label{lem:uniform-SMedgecover}
    There is a \(3\)-uniform hypergraph \(H'_2\) and a bijection \(\phi_2:E(H_2)\to E(H'_2)\) such that, for all \(C\sseq E(H_2)\), the image of \(C\) under \(\phi\) is an edge-cover of \(H'_2\) if and only if \(C\) is an edge-cover of \(H_2\).
    
    In particular, \(H_2\) has a strongly minimal edge-cover if and only if \(H'_2\) does.
\end{lemma}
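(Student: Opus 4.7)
The plan is to adapt the construction of \Cref{lem:uniform-SMmatching} to the edge-cover setting. The naive approach of adding a distinct fresh vertex to each $2$-edge fails here: such a vertex would lie in exactly one edge of $H_2'$ and would thereby force that edge into every edge-cover, whereas not every edge-cover of $H_2$ contains every $2$-edge. The fix is to share a common new vertex among many $2$-edges, so that this shared vertex is automatically covered by the image of any edge-cover of $H_2$.

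Concretely, for each \(e \in H_2^*\) I would introduce a single fresh vertex \(w_e\), and define \(\phi_2(f) = f \cup \{w_e\}\) for every \(2\)-edge \(f\) of \(G_e\), and \(\phi_2(f) = f\) for every \(3\)-edge \(f\) of \(G_e\). Since every edge of \(H_2\) lies in a unique gadget, \(\phi_2\) is well-defined. The resulting hypergraph \(H_2'\) on vertex set \(V(H_2) \cup \{w_e : e \in H_2^*\}\) is then \(3\)-uniform, and \(\phi_2\) is clearly a bijection onto \(E(H_2')\), with inverse given by forgetting any added \(w_e\).

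For the equivalence of edge-covers, the backward direction is immediate since \(\phi_2(f) \cap V(H_2) = f\) for every \(f\): any \(\phi_2[C]\) that covers \(V(H_2') \supseteq V(H_2)\) already covers \(V(H_2)\) via edges of \(C\). The forward direction reduces to showing that every edge-cover \(C\) of \(H_2\) contains at least one \(2\)-edge of each gadget \(G_e\), so that \(w_e\) is covered by \(\phi_2[C]\). The key observation is that the added vertex \(v_1^+(e)\) of \(G_e\) (for \(|e| \geq 2\)) appears in \(H_2\) only in the two edges \(v_1 v_1^+\) and \(v_1^+ v_2^-\) of \(G_e\), both of which are \(2\)-edges; since \(C\) must cover \(v_1^+(e)\), at least one of these lies in \(C\). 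The small cases \(|e| \leq 1\) are handled analogously or vacuously. The ``in particular'' conclusion then follows immediately from the fact that a bijection preserves symmetric differences of subsets of $E$.

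I anticipate no serious obstacle: the only non-routine point is identifying the witness vertex \(v_1^+(e)\), which traps at least one \(2\)-edge of each gadget into every edge-cover of \(H_2\) and thereby ensures \(w_e\) is automatically coverable.
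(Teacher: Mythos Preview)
Your approach is correct and essentially the same as the paper's: add fresh vertices to the 2-edges, and argue that every edge-cover of \(H_2\) must contain some 2-edge because a vertex like \(v_1^+(e)\) lies only in 2-edges. The only difference is that the paper is slightly more economical: it adds a \emph{single} new vertex \(v\) shared among \emph{all} 2-edges of \(H_2\), rather than one vertex \(w_e\) per gadget. This means the paper only needs to exhibit \emph{one} vertex of \(H_2\) lying solely in 2-edges (any \(v_1^+(e)\) will do), whereas you need the same fact for every gadget; but since the observation holds uniformly, both arguments go through with no real difference in difficulty.
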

\begin{proof}
    We construct the \(3\)-uniform hypergraph \(H'_2\) from \(H_2\) by adding one new vertex \(v\) to the graph, and adding that vertex to each edge of size \(2\). We take \(\phi_2(e)\) to be \(e\cup \{v\}\) for edges of size \(2\), and take \(\phi_2(e)\) to be \(e\) for edges of size \(3\). Now \(\bigcup\phi_2[C]=(\bigcup C)\cup \{v\}\) if \(C\) contains an edge of size \(2\), and \(\bigcup \phi_2[C]=\bigcup C\) otherwise.
    Hence certainly \(C\) is an edge-cover if \(\phi_2[C]\) is. But now the definition of a gadget implies that some vertex of \(H_2\) is only incident to edges of size \(2\). Hence any edge-cover of \(C\) must contain an edge of size \(2\), and so must have \(\cup\phi_2[C]=V(H_2)\cup \{v\}\). But then \(\phi_2[C]\) is also an edge-cover, giving the desired result.

    But now \(\phi_2\) and its inverse both preserve edge-covers. So just as in the proof of \Cref{lem:uniform-SMmatching}, if an edge-cover \(C\) of \(H_2\) is not strongly minimal, this is witnessed by some other edge-cover \(D\) with \(|D\setminus C|<|C\setminus D|\). But now
    \[\left|\phi_2[D]\setminus \phi_2[C]\right|=|D\setminus C|<|C\setminus D|=\left|\phi_2[C]\setminus \phi_2[D]\right|,\]
    so \(\phi_2[C]\) also cannot be strongly maximal. Hence we have the desired result.
\end{proof}

We now prove \Cref{thm:SMedgecover}, noting that this has many similarities to the proof of \Cref{thm:SMmatching} in the previous section.

\begin{proof}[Proof of \Cref{thm:SMedgecover}]
    By \Cref{lem:uniform-SMedgecover}, it is enough to prove that \(H_2\) has no strongly minimal edge-cover. Suppose for contradiction that \(C\) is a strongly minimal edge-cover of \(H_2\). Now we define \(C^*\) to be the set those edges \(e\) of \(H_2^*\) such that \(C\cap G_e\) contains at least one outer edge of \(G_e\). Now each vertex of \(H_2^*\), viewed as a vertex of \(H_2\), is covered by at least one edge of \(C\), which is necessarily an outer edge of some \(G_e\). But then that vertex is contained in \(e\), an edge of \(C^*\). So \(C^*\) is an edge-cover of \(H_2^*\). Notice that by \Cref{lem:gadget-SMmatching}, \(C\) contains \(|e|\) edges of \(G_e\) for each \(e\in C^*\), and \(|e|-1\) edges of \(G_e\) otherwise.

    Then, by \Cref{thm:SMcover_unbdd}, \(C^*\) cannot be strongly minimal, and so there is a matching \(D^*\) of \(H_2^*\) such that \(|D^*\setminus C^*|<|C^*\setminus D^*|\). We now define an edge-cover \(D\) of \(H_2\) as follows:

    \begin{itemize}
        \item For each \(e\in C^*\setminus D^*\), we take \(D\cap G_e\) to be exactly the inner edges of \(G_e\). Thus \(D\) uses one fewer edge of \(G_e\) than \(C\).
        \item For each \(e\in D^*\setminus C^*\), we take \(D\cap G_e\) to be exactly the outer edges of \(G_e\). Thus \(D\) uses one more edge of \(G_e\) than \(C\).
        \item For each vertex \(v\) contained in an edge of \(C^*\setminus D^*\), note that that vertex is covered by some edge \(e_v\) of \(D^*\) (picking one such edge arbitrarily if there are many options). For each such edge \(e_v\) which was already in \(C^*\), take \(D\cap G_e\) to be exactly the outer edges of \(G_e\). Notice there are finitely many such edges \(e_v\).
        
        Now for \(e=e_v\), the intersection \(D\cap G_e\) certainly covers every vertex that \(C \cap G_e\) did, as well as covering \(v\), and uses the same number of edges of \(G_e\) as \(C\).
        \item For any other edge \(e\) of \(H_2^*\), we take \(D\cap G_e\) to be exactly the same as \(C\cap G_e\).
    \end{itemize}

    We now have to check that this is indeed an edge-cover. By definition, every set \(D\cap G_e\) covers the added vertices of \(G_e\), so we just need to check that the vertices of \(H_2^*\) are covered by \(D\). The only time a vertex \(v\) covered by \(C\cap G_e\) is not also covered by \(D\cap G_e\) is when \(e\in C^*\setminus D^*\). But then either some edge \(e\) of \(D^*\setminus C^*\) contains \(v\), or there is an assigned edge \(e=e_v\). Either way, \(v\) is covered by \(D\cap G_e\), which is exactly the outer edges of that gadget. Thus \(D\) is an edge-cover of \(H_2\).

    But \(D\) differs from \(C\) on \(G_e\) for finitely many choices of \(e\), so \(D\setminus C\) and \(C\setminus D\) are both finite. And \(D\) contains one more edge of \(G_e\) than \(C\) whenever \(e\in D^*\setminus C^*\); one fewer whenever \(e\in C^*\setminus D^*\); and the same number otherwise. Hence
    \[|D\setminus C|-|C\setminus D|=\left|D^*\setminus C^*\right|-\left|C^*\setminus D^*\right|<0,\]
    and so \(S\) is not strongly minimal. Thus \(H_2\) has no strongly minimal edge-cover.
\end{proof}


\subsection{Strongly minimal graph colourings}
\label{subsec:colouring}

We recall that a hypergraph \(H\) is a \emph{flag complex} if every subset of an edge of \(H\) is itself an edge of \(H\), and every set of vertices in which every possible edge of size \(2\) is present in \(H\) is itself an edge of \(H\). 
Recall further that Aharoni and Berger \cite{AB09} conjectured that every flag complex should have a strongly minimal edge-cover. 
However, it is easy to see that the graph \(H_2^+=H_2\cup\{e:e\subset e'\in H_2\}\) is a flag complex, and likewise has no strongly minimal edge-cover: if \(C^+\) were a strongly minimal edge-cover of \(H_2^+\), then we could extend all edges of \(E(H_2^+)\setminus E(H_2)\) in \(C^+\) to be in \(E(H_2)\) by simply adding vertices, producing a strongly minimal edge-cover of \(H_2\).

This will allow us to prove \Cref{thm:SMcolouring}, which we recall states that there is a graph with no strongly minimal colouring.

\begin{proof}[Proof of \Cref{thm:SMcolouring}]
    We define the graph \(G\) on the same vertex set as \(H_2^+\) to contain exactly those edges \(uv\) which are not edges of \(H_2^+\). Now notice that as \(H_2^+\) is a flag complex, the independent sets of \(G\) are exactly the edges of \(H_2^+\).

    Now suppose that \(\chi\from V(G) \to P\) is a strongly minimal colouring of \(G\) for some palette \(P\). Then for each colour \(c \in P\), the set \(\chi^{-1}(c)\) of vertices with that colour is an independent set of \(G\), and thus is an edge \(e_c\) of \(H_2^+\). 
    Note also that \(e_c\) and \(e_{c'}\) are disjoint for any two distinct colours \(c,c'\in P\).

    But now certainly each vertex has some colour, so the edges \(C=\{e_c \st c \in P\}\) form an edge-cover of \(H_2^+\). Then let \(D\) be an edge-cover of \(H_2^+\) that witnesses that \(C\) is not strongly minimal---that is to say, \(C\setminus D=\{e_c:c\in I\}\) is larger than \(D\setminus C=\{e_c:c\in J\}\) for some sets \(I,J\sseq P\). But now we may recolour each vertex \(v\) for which \(\chi(v)\in I\) with some colour \(c\) such that \(v\in e_c\in J\). 
    But then the colouring \(\chi\) could not have been strongly minimal, giving the desired result.
\end{proof}


\subsection{Strongly minimal vertex-covers}
\label{subsec:SMvertexcover}

We now construct a graph with no strongly minimal vertex-cover.

Let $G$ be the graph on vertex set $\NN$ with edge set $E$ defined as follows.
\[E = \set[\big]{\set{u,v}\in \NN^{(2)} \st 2u \leq v}.\]
Assume for contradiction that some $A\sseq \NN$ is a strongly minimal vertex-cover.

Firstly, note that we may assume that $A \neq \NN$, as in this case we can simply remove an element of $A$ to produce a smaller vertex-cover.
Define $x\defined\min(\NN\setminus A)$ to be the minimal element of the complement of $A$. 
If there was a $y\in \NN\setminus A$ with $y \geq 2x$, then $\set{x,y}$ would be an edge of $G$, contradicting the fact that $A$ is a vertex-cover.
Thus we may let $y \defined \max(\NN\setminus A)$ be the largest element not in $A$.

We claim that $A' \defined (A\union\set{x})\setminus\set{y+1,y+2}$ is a vertex-cover of $G$.
Indeed, we know that $y < 2x$, and so $y+2 < 2\min A'$, so the complement of $A'$ induces no edge of $G$, and so $A'$ is a vertex-cover of $G$.
This contradicts the supposed strong minimality of $A$, as required.


\subsection{The problem of Tardos}
\label{subsec:tardos}

We now prove \Cref{thm:tardos} by directly constructing a hypergraph \(H_3\) such that, for every matching \(M\) of \(H_3\), there is a matching \(M'\) of \(H_3\) with \(\card{M\setminus M'} = 1\) and \(\card{M' \setminus M} = 2\).

\begin{proof}
    Let \(H_3\) be the hypergraph with vertex set \(\NN\times\NN\) and edge set \(\set{e_{x,y} \st x,y\geq 1}\), where
    \[e_{x,y} = \set{(x,1),(x,2),\dotsc,(x,y),(x,y),(x+1,y),\dotsc,(2x-1,y),(2x,y)}.\]
    Intuitively, we may think of the \(x\) and \(y\) axes as going to the right and upward respectively, as is standard, and then \(e_{x,y}\) roughly forms the shape of the Greek letter \(\Gamma\) of height \(y\) and width \(x\), where it also meets the horizontal axis at \(x\), as shown in \Cref{fig:tardos}.

    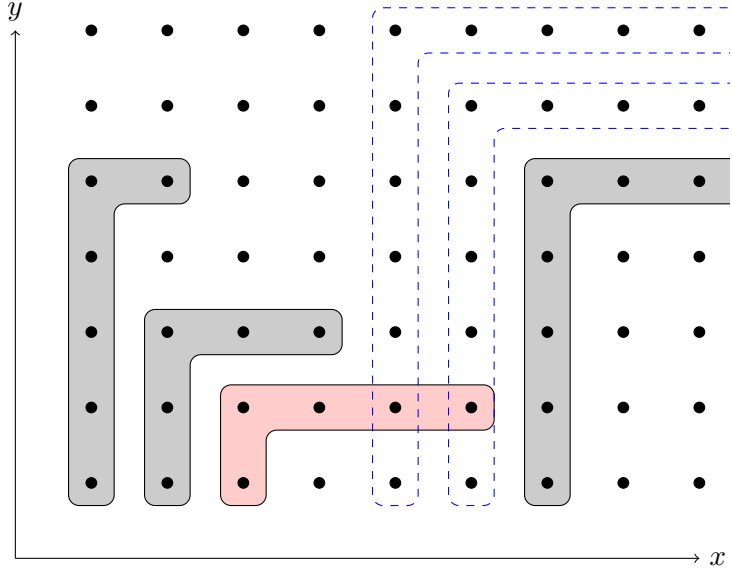
\begin{figure}[h]
\centering
    \begin{tikzpicture}

        \clip (-0.3,-0.3) rectangle + (9.7,8.7);

        \draw[->] (0,0) -- (9,0) node[anchor= west]{\(x\)};
        \draw[->] (0,0) -- (0,7) node[anchor= south]{\(y\)};

        \fill[black!20!white, rounded corners,draw=black] (0.7,0.7) -- (0.7,5.3) -- (2.3,5.3) -- (2.3,4.7) -- (1.3,4.7) -- (1.3,0.7) -- cycle;

        \fill[black!20!white, rounded corners,draw=black] (1.7,0.7) -- (1.7,3.3) -- (4.3,3.3) -- (4.3,2.7) -- (2.3,2.7) -- (2.3,0.7) -- cycle;

        \fill[red!20!white, rounded corners,draw=black] (2.7,0.7) -- (2.7,2.3) -- (6.3,2.3) -- (6.3,1.7) -- (3.3,1.7) -- (3.3,0.7) -- cycle;

        \fill[black!20!white, rounded corners,draw=black] (6.7,0.7) -- (6.7,5.3) -- (14.3,5.3) -- (14.3,4.7) -- (7.3,4.7) -- (7.3,0.7) -- cycle;

        \draw[blue, dashed, rounded corners] (4.7,0.7) -- (4.7,7.3) -- (10.3,7.3) -- (10.3,6.7) -- (5.3,6.7) -- (5.3,0.7) -- cycle;

        \draw[blue, dashed, rounded corners] (5.7,0.7) -- (5.7,6.3) -- (12.3,6.3) -- (12.3,5.7) -- (6.3,5.7) -- (6.3,0.7) -- cycle;

        \foreach \x in {1,...,9}
        {\foreach \y in {1,...,7}
        \filldraw[black] (\x,\y) circle (2pt);
        }

    \end{tikzpicture}
    \caption{A matching in the hypergraph \(H_3\). The edge filled in red can be replaced by the edges drawn in dashed blue.\label{fig:tardos}}
\end{figure}

    Let \(M\sseq E\) be a matching.
    We locate an edge which can be removed, with two added in its place.
    Indeed, assume that \(E = \set{e_{x_1,y_1}, e_{x_2,y_2},\dotsc}\), where \(x_1 < x_2 < \dotsc\), noting that we cannot have two edges with the same value of \(x\), as they would intersect.
    We write \(e_i\) for \(e_{x_i,y_i}\) for short.

    Let \(n\) be minimal such that \(y_{n+1}\geq y_n\), noting that such an \(n\) must exist.
    We will remove edge \(e_n\) from \(M\) and add two edges in its place.
    Define 
    \[A = \set{(x,y) \st x\in \set{2x_n-1,2x_n}, y\in \NN}.\]
    The key claim is that \(e_n\) is the only edge in \(M\) which intersects \(A\).

    Indeed, as \(y_{n+1}\geq y_n\), we must have \(x_{n+1}\geq 2x_n + 1\) for \(e_n\) and \(e_{n+1}\) to be disjoint from each other, and so \(e_{n+1},e_{n+2},\dotsc\) are disjoint from \(A\).
    Moreover, \(x_{n-1} \leq x_n-1\), and so \(e_{n-1}\) does not extend far enough to reach \(A\), and so \(e_{n-1},e_{n-2},\dotsc\) are also all disjoint from \(A\).
    Thus \(e_n\) is the only edge in \(M\) meeting \(A\), as claimed.

    We can now define the edges which will replace \(e_n\) to form a new matching:
    \[M' = (M\setm{e_n})\union\set{e_{2x_n-1,y_{n+1}+2}, e_{2x_n, y_{n+1}+1}}.\]
    Write \(f_1 = e_{2x_n-1,y_{n+1}+2}\) and \(f_2 = e_{2x_n, y_{n+1}+1}\) for convenience.
    It suffices to prove that \(M'\) is a matching, i.e.\ that \(f_1\) and \(f_2\) are disjoint from each other and from all other edges of \(M'\).
    It is immediate that \(f_1\) and \(f_2\) are disjoint from each other, and they are disjoint from \(e_1,e_2,\dotsc,e_{n-1}\) as none of these edges intersect \(A\).
    
    Let \(k\) be minimal such that \(x_{n+k}\geq 4x_n + 1\).
    Then the minimal \(x\)-coordinate of all edges \(e_{n+k},e_{n+k+1},\dotsc\) is at least \(4x_n + 1\), which is larger than the \(x\)-coordinate of any point in \(f_1\) or \(f_2\).
    
    Finally, if \(k\geq 2\), then the minimal \(y\)-coordinate of any edge \(e_{n+1},\dotsc,e_{n+k-1}\) is at most \(y_{n+1}\), as for all \(2\leq j \leq k-1\) we have \((x_{n+j},y_{n+1})\in e_{n+1}\), and so we must have \(y_{n+j} \leq y_{n+1}\) in order that \(e_{n+1}\) and \(e_{n+j}\) be disjoint.
    Thus \(e_{n+j}\) is also disjoint from \(f_1\) and \(f_2\), and so \(M'\) is indeed a matching, as required, and \Cref{thm:tardos} is proved.
\end{proof}


\section{Concluding remarks}

We now discuss some directions of further research, and state several problems which remain open.

Firstly, we may notice that our construction to prove \Cref{thm:tardos} required edges of unbounded size, and so we may refine the question of Tardos to require a hypergraph with bounded edges.
It appears difficult to reduce our counterexample to \Cref{q:tardos} to one with bounded edges by means of a gadget, as we did for \Cref{conj:matching-and-cover}.
Moreover, in the setting of bounded edge-size, the corresponding question for edge-covers also remains open.

\begin{question}
    Does every hypergraph with finitely bounded edges contain a matching $M$ such that no matching $M'$ has $\card{M \setminus M'} = 1$ and $\card{M' \setminus M} = 2$?
\end{question}
\begin{question}
    Does every hypergraph with finitely bounded edges and no isolated vertices contain an edge-cover $C$ such that no edge-cover $C'$ has $\card{C \setminus C'} = 2$ and $\card{C' \setminus C} = 1$?
\end{question}

In a related direction, if \Cref{conj:matching-and-cover} had been true, it would have implied the following refinement of a conjecture of Aharoni and Korman (see \cite[Conjecture 2.2]{Aha22}) by a compactness argument.

\begin{conjecture}
\label{conj:fishbone}
    Let $P$ be a poset of finite width (i.e. there is an integer $w$ such that all antichains of $P$ have size bounded above by $w$).
    Then there exists a chain $C$ in $P$ and a partition of $P$ into antichains such that $C$ meets every antichain in the partition.
\end{conjecture}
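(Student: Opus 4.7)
The plan is to first establish the statement for finite posets, and then attempt to bootstrap to the infinite case by combining Dilworth's theorem with a transfinite recursion, since the compactness route through \Cref{conj:matching-and-cover} is no longer available.

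For a finite poset $P$ of height $h$, Mirsky's theorem partitions $P$ into antichains $A_1,\dots,A_h$ given by the level sets of the rank function $\rho(x) = \sup\{\rho(y)+1 : y < x\}$, and any maximum chain of $P$ meets each $A_i$ exactly once. Thus the finite case is immediate, without even invoking the width bound $w$.

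For the infinite case, I would first apply the infinite form of Dilworth's theorem to partition $P$ into at most $w$ chains $P = D_1 \sqcup \cdots \sqcup D_w$. Using this decomposition, I would try to define a rank function $\rho:P \to \mathrm{Ord}$ whose level sets $\rho^{-1}(\alpha)$ are antichains that partition $P$, and then construct the chain $C$ by transfinite recursion, at each stage $\alpha$ selecting some element of rank $\alpha$ that is comparable to every previously chosen element. The Dilworth chains would be used to control the rank: for example, one might define $\rho(x)$ to be a well-chosen ordinal recording, for each $i\leq w$, the position of $x$ within $D_i$ and the positions of minimal elements above or below $x$ within each other $D_j$, tuned so that incomparability forces equal ranks and comparability forces distinct ones.

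The main obstacle, I expect, is that the Mirsky-style rank is ill-defined in the presence of infinite descending chains, a situation unavoidable even in width-$1$ posets such as $\mathbb{Q}$. Any replacement rank defined via an external well-ordering must satisfy two competing constraints: comparable elements must receive different ranks (so that level sets are antichains), and the ranks must be sufficiently concentrated that a single chain can pass through every level (so that $C$ can meet each antichain). Reconciling these pressures in a general finite-width poset, without being able to call on \Cref{conj:matching-and-cover} as a black box, is precisely where a genuinely new idea appears to be required, and is the step I would expect to consume almost all of the effort.
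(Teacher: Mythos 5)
The statement you are trying to prove is, in this paper, an \emph{open conjecture}, not a theorem. Immediately after stating it, the authors record its status explicitly: \Cref{conj:fishbone} ``is known for the case $w=2$ \cite{AK92}, but remains open even for the case of $w=3$.'' There is therefore no proof in the paper to compare your proposal against. The route the authors mention---a compactness argument from \Cref{conj:matching-and-cover}---is precisely what this paper shows to be unavailable, since they disprove \Cref{conj:matching-and-cover}. Your proposal ends by conceding that ``a genuinely new idea appears to be required,'' and that is the correct conclusion: you have not produced a proof, and none is currently known.

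A few remarks on the sketch itself. The finite case via Mirsky's theorem is fine but does not bear on the difficulty. The $\mathbb{Q}$ example shows that the Mirsky rank fails in the infinite setting, but it does not show the conjecture is hard at width $1$: for any chain $P$, take $C=P$ and partition $P$ into singleton antichains, which trivially satisfies the conclusion. The smallest genuinely unknown case is $w=3$, and neither the Dilworth decomposition nor the transfinite rank you gesture at gives visible traction there. Finally, note that the original 1992 version of the fishbone conjecture, which assumed only that all antichains are finite (not uniformly bounded), was ``resolved'' by the first author in \cite{Hol24}. Since a positive answer to that stronger version would immediately imply \Cref{conj:fishbone}, and the paper still lists \Cref{conj:fishbone} as open, that resolution must be a disproof; so the constructions in \cite{Hol24} are better understood as obstructions to naive rank or decomposition arguments than as a source of technique to transfer.
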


This conjecture, also known as the fishbone conjecture, was first stated in 1992 \cite{AK92} with the condition that the antichains were merely of finite size, rather than of size at most $k$.
That version of the conjecture was recently resolved by the first author \cite{Hol24}.
\Cref{conj:fishbone} is known for the case $w=2$ \cite{AK92}, but remains open even for the case of $w=3$.

We also draw attention to the following problem of Pach (see introduction of \cite{Erd94}).

\begin{question}
    Is it true that for any convex body $C$ in \(\RR^d\), there is a packing of congruent copies of $C$ with the property that none of its finite subfamilies can be replaced by a larger system?
\end{question}

Although this problem is much more geometric than those we consider, we nevertheless include it here, as it again concerns a strongly maximal object.

Finally, we conclude with a few words about the case of fractional matchings and covers.
One natural direction to look to weaken the conjectures considered here would be to instead ask whether a hypergraph \(H\) necessarily contains a strongly maximal fractional matching, and similarly for covers.
We recall that a fractional matching is a function \(f\from E(H)\to [0,1]\) such that, for all vertices \(v\), we have \(\sum_{e\ni v} f(e) \leq 1\).
Then \(f\) is strongly maximal if there is no function \(g\from E(H)\to [0,1]\) which differs from \(f\) on only finitely many values and has \(\sum_{e\in E(H)} (g(e) - f(e)) > 0\), i.e.\ has greater total weight on the values on which it differs from \(f\).
Strongly minimal fractional vertex-covers and edge-covers are defined similarly.

One could then ask whether any hypergraph \(H\) with bounded edges necessarily contains a strongly maximal fractional matching, a strongly minimal fractional edge-cover, and a strongly minimal fractional vertex-cover.
However, we remark that it is not too hard to show that \(H_1\), \(H_2\), and \(H_1\) respectively provide examples to show that these objects need not exist.


\bibliographystyle{abbrvnat}  
\renewcommand{\bibname}{Bibliography}
\bibliography{main}


\end{document}